\numberwithin{equation}{section}
\newcommand{\mymod}[3]{#1 \equiv #2 \kern -0.5em \pmod{#3}}
\newcommand{\mynotmod}[3]{#1 \not \equiv #2 \kern -0.6em \pmod{#3}}
\theoremstyle{plain}
\newtheorem{theorem}{Theorem}[section]
\theoremstyle{remark}
\newtheorem{example}[theorem]{Example}
\theoremstyle{definition}
\title{Dual Third-order Jacobsthal Quaternions}
\author[G. Cerda-Morales]{Gamaliel Cerda-Morales}
\date{}
\begin{document}
\maketitle

\vspace{-20pt}
\begin{center}
{\footnotesize Instituto de Matem\'aticas, Pontificia Universidad Cat\'olica de Valpara\'iso, Blanco Viel 596, Valpara\'iso, Chile. \\
E-mails: gamaliel.cerda@usm.cl / gamaliel.cerd.m@mail.pucv.cl  
}\end{center}

\vspace{5pt}

\begin{abstract}
In 2016, Y\"uce and Torunbalc\i\ Ayd\i n \cite{Yuc-Tor} defined dual Fibonacci quaternions. In this paper, we defined the dual third-order Jacobsthal quaternions and dual third-order Jacobsthal-Lucas quaternions. Also, we investigated the relations between the dual third-order Jacobsthal quaternions and third-order Jacobsthal numbers. Furthermore, we gave some their quadratic properties, the summations, the Binet's formulas and Cassini-like identities for these quaternions.
\end{abstract}

\medskip
\noindent
\subjclass{\footnotesize {\bf Mathematical subject classification:} 
Primary: 11R52; Secondary: 11B37, 20G20.}

\medskip
\noindent
\keywords{\footnotesize {\bf Key words:} Thir-order Jacobsthal number, third-order Jacobsthal-Lucas number, third-order Jacobsthal quaternions, third-order Jacobsthal-Lucas quaternions, dual quaternion.}
\medskip

\section{Introduction}\label{sec:1}
The real quaternions are a number system which extends to the complex numbers. They are first described by Irish mathematician William Rowan Hamilton in 1843. In 1963, Horadam \cite{Hor2} defined the $n$-th Fibonacci quaternion which can be represented as
\begin{equation}\label{eq:1}
Q_{F}=\{Q_{n}=F_{n}+\textbf{i}F_{n+1}+\textbf{j}F_{n+2}+\textbf{k}F_{n+3}:\ \textrm{$F_{n}$ is $n$-th Fibonacci number} \},
\end{equation}
where $\textbf{i}^{2}=\textbf{j}^{2}=\textbf{k}^{2}=\textbf{ijk}=-1$.

In 1969, Iyer \cite{Iye1,Iye2} derived many relations for the Fibonacci quaternions. In 1977, Iakin \cite{Iak1,Iak2} introduced higher order quaternions and gave some identities for these quaternions. Furthermore, Horadam \cite{Hor3} extend to quaternions to the complex Fibonacci numbers defined by Harman \cite{Har}. In 2012, Hal\i c\i\ \cite{Hal} gave generating functions and Binet's formulas for Fibonacci and Lucas quaternions.

In 2006, Majernik \cite{Maj} defined a new type of quaternions, the so-called dual quaternions in the form $Q_{\mathbb{N}}=\{a+b\textbf{i}+c\textbf{j}+d\textbf{k}:\ a,b,c,d\in \mathbb{R}\}$, with the following multiplication schema for the quaternion units
\begin{equation}\label{eq:2}
\textbf{i}^{2}=\textbf{j}^{2}=\textbf{k}^{2}=0,\ \textbf{ij}=-\textbf{ji}=\textbf{jk}=-\textbf{kj}=\textbf{ik}=-\textbf{ki}=0.
\end{equation}
In 2009, Ata and Yayl\i\ \cite{Ata-Yay} defined dual quaternions with dual numbers coefficient as follows:
\begin{equation}
Q_{\mathbb{D}}=\{A+B\textbf{i}+C\textbf{j}+D\textbf{k}:\ A,B,C,D \in \mathbb{D},\ \textbf{i}^{2}=\textbf{j}^{2}=\textbf{k}^{2}=\textbf{ijk}=-1\},
\end{equation}
where $\mathbb{D}=\mathbb{R}[\varepsilon]=\{a+b\varepsilon:\ a,b\in \mathbb{R},\ \varepsilon^{2}=0,\ \varepsilon\neq 0 \}$. It is clear that $Q_{\mathbb{N}}$ and $Q_{\mathbb{D}}$ are different sets. In 2014, Nurkan and G\"uven \cite{Nur-Guv} defined dual Fibonacci quaternions as follows:
\begin{equation}\label{eq:3}
\mathbb{D}_{F}=\{Q_{n}=\widehat{F}_{n}+\textbf{i}\widehat{F}_{n+1}+\textbf{j}\widehat{F}_{n+2}+\textbf{k}\widehat{F}_{n+3}:\ \widehat{F}_{n}=F_{n}+\varepsilon F_{n+1}\},
\end{equation}
where $\textbf{i}^{2}=\textbf{j}^{2}=\textbf{k}^{2}=\textbf{ijk}=-1$ and $\widehat{F}_{n}$ is the $n$-th dual Fibonacci number. 

In 2016, Y\"uce and Torunbalc\i\ Ayd\i n \cite{Yuc-Tor} defined dual Fibonacci quaternions as follows:
\begin{equation}\label{eq:4}
\mathbb{N}_{F}=\{Q_{n}=F_{n}+\textbf{i}F_{n+1}+\textbf{j}F_{n+2}+\textbf{k}F_{n+3}:\ \textrm{$F_{n}$ is $n$-th Fibonacci number}\},
\end{equation}
where $\textbf{i}^{2}=\textbf{j}^{2}=\textbf{k}^{2}=0,\ \textbf{ij}=-\textbf{ji}=\textbf{jk}=-\textbf{kj}=\textbf{ik}=-\textbf{ki}=0$.

In the other hand, the Jacobsthal numbers have many interesting properties and applications in many fields of science (see, e.g., \cite{Bar}). The Jacobsthal numbers $J_{n}$ are defined by the recurrence relation
\begin{equation}\label{eq:5}
J_{0}=0,\ J_{1}=1,\ J_{n+1}=J_{n}+2J_{n-1},\ n\geq1.
\end{equation}
Another important sequence is the Jacobsthal-Lucas sequence. This sequence is defined by the recurrence relation $j_{n+1}=j_{n}+2j_{n-1},\ n\geq1$ and $j_{0}=2,\ j_{1}=1 $. (see, \cite{Hor4}).

In \cite{Coo-Bac} the Jacobsthal recurrence relation (\ref{eq:5}) is extended to higher order recurrence relations and the basic list of identities provided by A. F. Horadam \cite{Hor4} is expanded and extended to several identities for some of the higher order cases. In particular, third-order Jacobsthal numbers, $\{J_{n}^{(3)}\}_{n\geq0}$, and third-order Jacobsthal-Lucas numbers, $\{j_{n}^{(3)}\}_{n\geq0}$, are defined by
\begin{equation}\label{eq:6}
J_{n+3}^{(3)}=J_{n+2}^{(3)}+J_{n+1}^{(3)}+2J_{n}^{(3)},\ J_{0}^{(3)}=0,\ J_{1}^{(3)}=J_{2}^{(3)}=1,\ n\geq0,
\end{equation}
and 
\begin{equation}\label{eq:7}
j_{n+3}^{(3)}=j_{n+2}^{(3)}+j_{n+1}^{(3)}+2j_{n}^{(3)},\ j_{0}^{(3)}=2,\ j_{1}^{(3)}=1,\ j_{2}^{(3)}=5,\ n\geq0,
\end{equation}
respectively.

The following properties given for third-order Jacobsthal numbers and third-order Jacobsthal-Lucas numbers play important roles in this paper (for more, see \cite{Coo-Bac}). 
\begin{equation}\label{p:1}
3J_{n}^{(3)}+j_{n}^{(3)}=2^{n+1},
\end{equation}
\begin{equation}\label{p:2}
j_{n}^{(3)}-3J_{n}^{(3)}=2j_{n-3}^{(3)},
\end{equation}
\begin{equation}\label{p:3}
J_{n+2}^{(3)}-4J_{n}^{(3)}=\left\{ 
\begin{array}{ccc}
-2 & \textrm{if} & \mymod{n}{1}{3} \\ 
1 & \textrm{if} & \mynotmod{n}{1}{3}%
\end{array}%
\right. ,
\end{equation}
\begin{equation}\label{p:4}
j_{n}^{(3)}-4J_{n}^{(3)}=\left\{ 
\begin{array}{ccc}
2 & \textrm{if} & \mymod{n}{0}{3} \\ 
-3 & \textrm{if} & \mymod{n}{1}{3}\\ 
1 & \textrm{if} & \mymod{n}{2}{3}%
\end{array}%
\right. ,
\end{equation}
\begin{equation}\label{p:5}
j_{n+1}^{(3)}+j_{n}^{(3)}=3J_{n+2}^{(3)},
\end{equation}
\begin{equation}\label{p:6}
j_{n}^{(3)}-J_{n+2}^{(3)}=\left\{ 
\begin{array}{ccc}
1 & \textrm{if} & \mymod{n}{0}{3} \\ 
-1 & \textrm{if} & \mymod{n}{1}{3} \\ 
0 & \textrm{if} & \mymod{n}{2}{3}%
\end{array}%
\right. ,
\end{equation}
\begin{equation}\label{p:7}
\left( j_{n-3}^{(3)}\right) ^{2}+3J_{n}^{(3)}j_{n}^{(3)}=4^{n},
\end{equation}
\begin{equation}\label{p:8}
\sum\limits_{k=0}^{n}J_{k}^{(3)}=\left\{ 
\begin{array}{ccc}
J_{n+1}^{(3)} & \textrm{if} & \mynotmod{n}{0}{3} \\ 
J_{n+1}^{(3)}-1 & \textrm{if} & \mymod{n}{0}{3}%
\end{array}%
\right. 
\end{equation}
and
\begin{equation}\label{p:9}
\left( j_{n}^{(3)}\right) ^{2}-9\left( J_{n}^{(3)}\right)^{2}=2^{n+2}j_{n-3}^{(3)}.
\end{equation}

Using standard techniques for solving recurrence relations, the auxiliary equation, and its roots are given by 
$$x^{3}-x^{2}-x-2=0;\ x = 2,\ \textrm{and}\ x=\frac{-1\pm i\sqrt{3}}{2}.$$ 

Note that the latter two are the complex conjugate cube roots of unity. Call them $\omega_{1}$ and $\omega_{2}$, respectively. Thus the Binet formulas can be written as
\begin{equation}\label{b1}
J_{n}^{(3)}=\frac{2}{7}2^{n}-\frac{3+2i\sqrt{3}}{21}\omega_{1}^{n}-\frac{3-2i\sqrt{3}}{21}\omega_{2}^{n}=\frac{1}{7}\left(2^{n+1}-V_{n}^{(3)}\right)
\end{equation}
and
\begin{equation}\label{b2}
j_{n}^{(3)}=\frac{8}{7}2^{n}+\frac{3+2i\sqrt{3}}{7}\omega_{1}^{n}+\frac{3-2i\sqrt{3}}{7}\omega_{2}^{n}=\frac{1}{7}\left(2^{n+3}+3V_{n}^{(3)}\right),
\end{equation}
respectively. Here $V_{n}^{(3)}$ is the sequence defined by 
\begin{equation}\label{v}
V_{n}^{(3)}=\frac{3+2i\sqrt{3}}{3}\omega_{1}^{n}+\frac{3-2i\sqrt{3}}{3}\omega_{2}^{n}=\left\{ 
\begin{array}{ccc}
2 & \textrm{if} & \mymod{n}{0}{3} \\ 
-3 & \textrm{if} & \mymod{n}{1}{3} \\ 
1 & \textrm{if} & \mymod{n}{2}{3}%
\end{array}%
\right. .
\end{equation}

Recently in \cite{Cer}, we have defined a new type of quaternions with the third-order Jacobsthal and third-order Jacobsthal-Lucas number components as
\[
JQ_{n}^{(3)}=J_{n}^{(3)}+J_{n+1}^{(3)}\textbf{i}+J_{n+2}^{(3)}\textbf{j}+J_{n+3}^{(3)}\textbf{k}
\]
and
\[
jQ_{n}^{(3)}=j_{n}^{(3)}+j_{n+1}^{(3)}\textbf{i}+j_{n+2}^{(3)}\textbf{j}+j_{n+3}^{(3)}\textbf{k},
\]
respectively, where $\textbf{i}^{2}=\textbf{j}^{2}=\textbf{k}^{2}=\textbf{ijk}=-1$, and we studied the properties of these quaternions. Also, we derived the generating functions and many other identities for the third-order Jacobsthal and third-order Jacobsthal-Lucas quaternions.

In this paper, we define the dual third-order Jacobsthal quaternions and dual third-order Jacobsthal-Lucas quaternions as follows:
\begin{equation}\label{def:1}
JN_{m}^{(3)}=J_{m}^{(3)}+J_{m+1}^{(3)}\textbf{i}+J_{m+2}^{(3)}\textbf{j}+J_{m+3}^{(3)}\textbf{k}\ (m\geq0)
\end{equation}
and
\begin{equation}\label{def:2}
jN_{m}^{(3)}=j_{m}^{(3)}+j_{m+1}^{(3)}\textbf{i}+j_{m+2}^{(3)}\textbf{j}+j_{m+3}^{(3)}\textbf{k}\ (m\geq0),
\end{equation}
respectively. Here $\textbf{i}^{2}=\textbf{j}^{2}=\textbf{k}^{2}=0,\ \textbf{ij}=-\textbf{ji}=\textbf{jk}=-\textbf{kj}=\textbf{ik}=-\textbf{ki}=0$. Also, we investigated the relations between the dual third-order Jacobsthal quaternions and third-order Jacobsthal numbers. Furthermore, we gave some their quadratic properties, the Binet's formulas, d'Ocagne and Cassini-like identities for these quaternions.

\section{Dual Third-order Jacobsthal Quaternions}\label{sec:2}
We can define dual third-order Jacobsthal quaternions by using third-order Jacobsthal numbers. The $n$-th third-order Jacobsthal number $J_{n}^{(3)}$ is defined by Eq. (\ref{eq:6}). Then, we can define the dual third-order Jacobsthal quaternions as follows:
\begin{equation}\label{eq:8}
\mathbb{N}_{J}=\{JN_{m}^{(3)}=J_{m}^{(3)}+J_{m+1}^{(3)}\textbf{i}+J_{m+2}^{(3)}\textbf{j}+J_{m+3}^{(3)}\textbf{k}\},
\end{equation}
where $J_{m}^{(3)}$ is the $m$-th third-order Jacobsthal number and $\{\textbf{i},\textbf{j},\textbf{k}\}$ as in Eq. (\ref{eq:2}). Also, we can define the dual third-order Jacobsthal-Lucas quaternion as follows:
\begin{equation}\label{eq:9}
\mathbb{N}_{j}=\{jN_{m}^{(3)}=j_{m}^{(3)}+j_{m+1}^{(3)}\textbf{i}+j_{m+2}^{(3)}\textbf{j}+j_{m+3}^{(3)}\textbf{k}\},
\end{equation}
where $j_{m}^{(3)}$ is the $m$-th third-order Jacobsthal-Lucas number.

Then, the addition and subtraction of the dual third-order Jacobsthal and dual third-order Jacobsthal-Lucas quaternions is defined by
\begin{equation}\label{eq:10}
\begin{aligned}
JN_{m}^{(3)}\pm jN_{m}^{(3)}&=(J_{m}^{(3)}+J_{m+1}^{(3)}\textbf{i}+J_{m+2}^{(3)}\textbf{j}+J_{m+3}^{(3)}\textbf{k})\\
&\ \ \pm (j_{m}^{(3)}+j_{m+1}^{(3)}\textbf{i}+j_{m+2}^{(3)}\textbf{j}+j_{m+3}^{(3)}\textbf{k})\\
&=(J_{m}^{(3)}\pm j_{m}^{(3)})+(J_{m+1}^{(3)}\pm j_{m+1}^{(3)})\textbf{i}+(J_{m+2}^{(3)}\pm j_{m+2}^{(3)})\textbf{j}\\
&\ \ +(J_{m+3}^{(3)}\pm j_{m+3}^{(3)})\textbf{k}
\end{aligned}
\end{equation}
and the multiplication of the dual third-order Jacobsthal and dual third-order Jacobsthal-Lucas quaternions is defined by
\begin{equation}\label{eq:11}
\begin{aligned}
&JN_{m}^{(3)}jN_{m}^{(3)}\\
&=(J_{m}^{(3)}+J_{m+1}^{(3)}\textbf{i}+J_{m+2}^{(3)}\textbf{j}+J_{m+3}^{(3)}\textbf{k})(j_{m}^{(3)}+j_{m+1}^{(3)}\textbf{i}+j_{m+2}^{(3)}\textbf{j}+j_{m+3}^{(3)}\textbf{k})\\
&=J_{m}^{(3)}j_{m}^{(3)}+(J_{m}^{(3)}j_{m+1}^{(3)}+J_{m+1}^{(3)}j_{m}^{(3)})\textbf{i}+(J_{m}^{(3)}j_{m+2}^{(3)}+J_{m+2}^{(3)}j_{m}^{(3)})\textbf{j}\\
&\ \ +(J_{m}^{(3)}j_{m+3}^{(3)}+J_{m+3}^{(3)}j_{m}^{(3)})\textbf{k}.
\end{aligned}
\end{equation}

Now, the scalar and the vector part of the $JN_{m}^{(3)}$ which is the $m$-th term of the dual third-order Jacobsthal sequence $\{JN_{m}^{(3)}\}_{m\geq0}$ are denoted by
\begin{equation}\label{eq:12}
(S_{JN_{m}^{(3)}},V_{JN_{m}^{(3)}})=(J_{m}^{(3)},J_{m+1}^{(3)}\textbf{i}+J_{m+2}^{(3)}\textbf{j}+J_{m+3}^{(3)}\textbf{k}).
\end{equation}
Thus, the dual third-order Jacobsthal $JN_{m}^{(3)}$ is given by $S_{JN_{m}^{(3)}}+V_{JN_{m}^{(3)}}$. Then, relation (\ref{eq:11}) is defined by
\begin{equation}\label{eq:13}
JN_{m}^{(3)}jN_{m}^{(3)}=S_{JN_{m}^{(3)}}S_{jN_{m}^{(3)}}+S_{JN_{m}^{(3)}}V_{jN_{m}^{(3)}}+S_{jN_{m}^{(3)}}V_{JN_{m}^{(3)}}.
\end{equation}
The conjugate of dual third-order Jacobsthal quaternion $JN_{m}^{(3)}$ is denoted by $\overline{JN}_{m}^{(3)}$ and
it is $\overline{JN}_{m}^{(3)}=J_{m}^{(3)}-J_{m+1}^{(3)}\textbf{i}-J_{m+2}^{(3)}\textbf{j}-J_{m+3}^{(3)}\textbf{k}$. The norm of $JN_{m}^{(3)}$ is defined as
\begin{equation}\label{eq:15}
\Vert JN_{m}^{(3)} \Vert ^{2}=JN_{m}^{(3)}\overline{JN}_{m}^{(3)}=\overline{JN}_{m}^{(3)}JN_{m}^{(3)}=\left(J_{m}^{(3)}\right)^{2}.
\end{equation}

Then, we give the following theorem using statements (\ref{eq:8}) and (\ref{eq:9}).
\begin{theorem}\label{th:1}
Let $J_{m}^{(3)}$ and $JN_{m}^{(3)}$ be the $m$-th terms of the third-order Jacobsthal sequence $\{J_{m}^{(3)}\}_{m\geq0}$ and the dual third-order Jacobsthal quaternion sequence $\{JN_{m}^{(3)}\}_{m\geq0}$, respectively. In this case, for $m\geq 0$ we can give the following relations:
\begin{equation}\label{t1}
2JN_{m}^{(3)}+JN_{m+1}^{(3)}+JN_{m+2}^{(3)}=JN_{m+3}^{(3)},
\end{equation}
\begin{equation}\label{t2}
JN_{m}^{(3)}-JN_{m+1}^{(3)}\textbf{i}-JN_{m+2}^{(3)}\textbf{j}-JN_{m+3}^{(3)}\textbf{k}=J_{m}^{(3)},
\end{equation}
\begin{equation}\label{t3}
\Vert JN_{m}^{(3)} \Vert ^{2}+\Vert JN_{m+1}^{(3)} \Vert ^{2}+\Vert JN_{m+2}^{(3)} \Vert ^{2}=\frac{1}{7}\left(\begin{array}{ccc} 3\cdot2^{2(m+1)}(1+4\textbf{i}+8\textbf{j}+16\textbf{k})\\
-2^{m+2}UN_{m}^{(3)}\\
-2^{m+3}U_{m}^{(3)}(\textbf{i}+2\textbf{j}+4\textbf{k})\\
+2(1-\textbf{i}-\textbf{j}+2\textbf{k})\end{array}\right),
\end{equation}
where $UN_{m}^{(3)}=U_{m}^{(3)}+U_{m+1}^{(3)}\textbf{i}+U_{m+2}^{(3)}\textbf{j}+U_{m+3}^{(3)}\textbf{k}$ and $U_{m}^{(3)}=\frac{1}{7}\left(V_{m+1}^{(3)}+3V_{m+2}^{(3)}\right)$.
\end{theorem}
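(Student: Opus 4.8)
\emph{Proof proposal.} The first two identities I would settle directly from the definitions. For (\ref{t1}) I read the third-order Jacobsthal recurrence (\ref{eq:6}) off in each of the four slots of the quaternion: the component of $2JN_{m}^{(3)}+JN_{m+1}^{(3)}+JN_{m+2}^{(3)}$ carrying $\textbf{i}^{t}$ is $2J_{m+t}^{(3)}+J_{m+t+1}^{(3)}+J_{m+t+2}^{(3)}=J_{m+t+3}^{(3)}$, which is exactly the matching component of $JN_{m+3}^{(3)}$; assembling the four slots gives the claim. For (\ref{t2}) I use the degenerate table (\ref{eq:2}): right-multiplying $JN_{m+1}^{(3)}$ by $\textbf{i}$ kills every vector term, since $\textbf{i}^{2}=\textbf{ji}=\textbf{ki}=0$, so $JN_{m+1}^{(3)}\textbf{i}=J_{m+1}^{(3)}\textbf{i}$, and likewise $JN_{m+2}^{(3)}\textbf{j}=J_{m+2}^{(3)}\textbf{j}$ and $JN_{m+3}^{(3)}\textbf{k}=J_{m+3}^{(3)}\textbf{k}$; subtracting these three from $JN_{m}^{(3)}$ erases its whole vector part and leaves the scalar $J_{m}^{(3)}$.

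The third identity carries the real content. Since every product of units in (\ref{eq:2}) vanishes, each square is $\left(JN_{m+i}^{(3)}\right)^{2}=(J_{m+i}^{(3)})^{2}+2J_{m+i}^{(3)}J_{m+i+1}^{(3)}\textbf{i}+2J_{m+i}^{(3)}J_{m+i+2}^{(3)}\textbf{j}+2J_{m+i}^{(3)}J_{m+i+3}^{(3)}\textbf{k}$, whose scalar part is the norm (\ref{eq:15}); I therefore read the left side as $\sum_{i=0}^{2}\left(JN_{m+i}^{(3)}\right)^{2}$ and reduce the proof to four scalar identities, one per slot, each of the form $\sum_{i=0}^{2}J_{m+i}^{(3)}J_{m+i+t}^{(3)}$ with $t=0,1,2,3$. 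The plan is to feed each product through the Binet formula (\ref{b1}), $J_{n}^{(3)}=\tfrac17(2^{n+1}-V_{n}^{(3)})$, which splits $J_{n}^{(3)}J_{n+t}^{(3)}$ into a power-of-four term $\tfrac{1}{49}2^{\,2n+t+2}$, a mixed term linear in $2^{\bullet}V^{(3)}_{\bullet}$, and a quadratic term $\tfrac{1}{49}V_{n}^{(3)}V_{n+t}^{(3)}$.

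Everything then reduces to the period-$3$ behaviour of $V_{n}^{(3)}$ fixed by (\ref{v}). The identities I would use over each block of three consecutive indices are $\sum_{i=0}^{2}V_{m+i}^{(3)}=0$, $\sum_{i=0}^{2}(V_{m+i}^{(3)})^{2}=14$, $\sum_{i=0}^{2}V_{m+i}^{(3)}V_{m+i+1}^{(3)}=\sum_{i=0}^{2}V_{m+i}^{(3)}V_{m+i+2}^{(3)}=-7$, and $V_{n+3}^{(3)}=V_{n}^{(3)}$. The power-of-four parts then sum to the block $3\cdot2^{2(m+1)}(1+4\textbf{i}+8\textbf{j}+16\textbf{k})$; the quadratic parts collapse (via the period-sums $14,-7,-7,14$ and the factor $2$ on the vector slots) to the constant $2(1-\textbf{i}-\textbf{j}+2\textbf{k})$; and the mixed parts assemble into the two $V$-dependent blocks $-2^{m+2}UN_{m}^{(3)}$ and $-2^{m+3}U_{m}^{(3)}(\textbf{i}+2\textbf{j}+4\textbf{k})$, the scalar slot receiving only the $U_{m}^{(3)}$ piece of $UN_{m}^{(3)}$ and each vector slot $t$ receiving $U_{m+t}^{(3)}$ together with a share $c_{t}U_{m}^{(3)}$, $(c_{1},c_{2},c_{3})=(1,2,4)$, of the second block.

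The one delicate point, which I expect to be the main obstacle, is the identification of the coefficient $U_{m}^{(3)}$. The mixed terms emerge naturally as $\tfrac17(V_{m}^{(3)}+2V_{m+1}^{(3)}+4V_{m+2}^{(3)})$, whereas the statement records $U_{m}^{(3)}=\tfrac17(V_{m+1}^{(3)}+3V_{m+2}^{(3)})$; the two coincide only after applying $V_{m}^{(3)}+V_{m+1}^{(3)}+V_{m+2}^{(3)}=0$. The same zero-sum relation is exactly what makes the surplus $V$-terms in each vector slot cancel, so that the $UN_{m}^{(3)}$ and $U_{m}^{(3)}(\textbf{i}+2\textbf{j}+4\textbf{k})$ blocks separate cleanly. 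I would therefore carry out the $\textbf{i}$-slot in full to pin down the bookkeeping conventions, then let the scalar, $\textbf{j}$- and $\textbf{k}$-slots follow by the identical mechanism with $t$ shifted.
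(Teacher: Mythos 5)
Your proposal is correct and follows essentially the same route as the paper: componentwise use of the recurrence for (\ref{t1}), the degenerate multiplication table for (\ref{t2}), and for (\ref{t3}) squaring via $\left(JN_{m}^{(3)}\right)^{2}=\left(J_{m}^{(3)}\right)^{2}+2J_{m}^{(3)}J_{m+1}^{(3)}\textbf{i}+2J_{m}^{(3)}J_{m+2}^{(3)}\textbf{j}+2J_{m}^{(3)}J_{m+3}^{(3)}\textbf{k}$ followed by the Binet form $J_{n}^{(3)}=\tfrac{1}{7}\left(2^{n+1}-V_{n}^{(3)}\right)$ and the period-three relations for $V_{n}^{(3)}$, including the reduction $V_{m}^{(3)}+2V_{m+1}^{(3)}+4V_{m+2}^{(3)}=V_{m+1}^{(3)}+3V_{m+2}^{(3)}=7U_{m}^{(3)}$ that the paper also uses implicitly. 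Your bookkeeping of the constant term and of the split into $UN_{m}^{(3)}$ and $U_{m}^{(3)}(\textbf{i}+2\textbf{j}+4\textbf{k})$ checks out.
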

\begin{proof}
(\ref{t1}): By the equations $JN_{m}^{(3)}=J_{m}^{(3)}+J_{m+1}^{(3)}\textbf{i}+J_{m+2}^{(3)}\textbf{j}+J_{m+3}^{(3)}\textbf{k}$ and (\ref{eq:6}), we get
\begin{align*}
2JN_{m}^{(3)}&+JN_{m+1}^{(3)}+JN_{m+2}^{(3)}\\
&=(2J_{m}^{(3)}+2J_{m+1}^{(3)}\textbf{i}+2J_{m+2}^{(3)}\textbf{j}+2J_{m+3}^{(3)}\textbf{k})\\
&\ \ + (J_{m+1}^{(3)}+J_{m+2}^{(3)}\textbf{i}+J_{m+3}^{(3)}\textbf{j}+J_{m+4}^{(3)}\textbf{k})\\
&\ \ + (J_{m+2}^{(3)}+J_{m+3}^{(3)}\textbf{i}+J_{m+4}^{(3)}\textbf{j}+J_{m+5}^{(3)}\textbf{k})\\
&=(2J_{m}^{(3)}+J_{m+1}^{(3)}+J_{m+2}^{(3)})+(2J_{m+1}^{(3)}+J_{m+2}^{(3)}+J_{m+3}^{(3)})\textbf{i}\\
&\ \ + (2J_{m+2}^{(3)}+J_{m+3}^{(3)}+J_{m+4}^{(3)})\textbf{j}+(2J_{m+3}^{(3)}+J_{m+4}^{(3)}+J_{m+5}^{(3)})\textbf{k})\\
&=J_{m+3}^{(3)}+J_{m+4}^{(3)}\textbf{i}+J_{m+5}^{(3)}\textbf{j}+J_{m+6}^{(3)}\textbf{k}=JN_{m+3}^{(3)}.
\end{align*}
(\ref{t2}): By using $JN_{m}^{(3)}$ in the Eq. (\ref{eq:6}) and conditions (\ref{eq:2}), we get
\begin{align*}
JN_{m}^{(3)}-JN_{m+1}^{(3)}\textbf{i}-JN_{m+2}^{(3)}\textbf{j}-JN_{m+3}^{(3)}\textbf{k}&=J_{m}^{(3)}+J_{m+1}^{(3)}\textbf{i}+J_{m+2}^{(3)}\textbf{j}+J_{m+3}^{(3)}\textbf{k}\\
&\ \ - (J_{m+1}^{(3)}+J_{m+2}^{(3)}\textbf{i}+J_{m+3}^{(3)}\textbf{j}+J_{m+4}^{(3)}\textbf{k})\textbf{i}\\
&\ \ - (J_{m+2}^{(3)}+J_{m+3}^{(3)}\textbf{i}+J_{m+4}^{(3)}\textbf{j}+J_{m+5}^{(3)}\textbf{k})\textbf{j}\\
&\ \ - (J_{m+3}^{(3)}+J_{m+4}^{(3)}\textbf{i}+J_{m+5}^{(3)}\textbf{j}+J_{m+6}^{(3)}\textbf{k})\textbf{k}\\
&=J_{m}^{(3)}.
\end{align*}
(\ref{t3}): By using Eqs. (\ref{eq:11}) and (\ref{b1}), we get
\begin{equation}\label{eq:16}
\left(JN_{m}^{(3)}\right)^{2}=\left(J_{m}^{(3)}\right)^{2}+2J_{m}^{(3)}J_{m+1}^{(3)}\textbf{i}+2J_{m}^{(3)}J_{m+2}^{(3)}\textbf{j}+2J_{m}^{(3)}J_{m+3}^{(3)}\textbf{k},
\end{equation}
and
\begin{equation}\label{eq:17}
\begin{aligned}
&\left(J_{m}^{(3)}\right)^{2}+\left(J_{m+1}^{(3)}\right)^{2}+\left(J_{m+2}^{(3)}\right)^{2}\\
&=\frac{1}{49}\left( \left(2^{m+1}-V_{m}^{(3)}\right)^{2}+\left(2^{m+2}-V_{m+1}^{(3)}\right)^{2}+\left(2^{m+3}-V_{m+2}^{(3)}\right)^{2}\right)\\
&=\frac{1}{49}\left( 21\cdot 2^{2(m+1)}-2^{m+2}(V_{m}^{(3)}+2V_{m+1}^{(3)}+4V_{m+2}^{(3)})+14\right)\\
&=\frac{1}{7}\left( 3\cdot 2^{2(m+1)}-2^{m+2}U_{m}^{(3)}+2\right),
\end{aligned}
\end{equation}
where $U_{m}^{(3)}=\frac{1}{7}\left(V_{m+1}^{(3)}+3V_{m+2}^{(3)}\right)$. Finally, from the Eqs. (\ref{eq:16}) and (\ref{eq:17}), we obtain
\begin{align*}
\left(JN_{m}^{(3)}\right)^{2}+&\left(JN_{m+1}^{(3)}\right)^{2}+\left(JN_{m+2}^{(3)}\right)^{2}\\
&=\left(J_{m}^{(3)}\right)^{2}+\left(J_{m+1}^{(3)}\right)^{2}+\left(J_{m+2}^{(3)}\right)^{2}\\
&\ \ +2\left(J_{m}^{(3)}J_{m+1}^{(3)}+J_{m+1}^{(3)}J_{m+2}^{(3)}+J_{m+2}^{(3)}J_{m+3}^{(3)}\right)\textbf{i}\\
&\ \ +2\left(J_{m}^{(3)}J_{m+2}^{(3)}+J_{m+1}^{(3)}J_{m+3}^{(3)}+J_{m+2}^{(3)}J_{m+4}^{(3)}\right)\textbf{j}\\
&\ \ +2\left(J_{m}^{(3)}J_{m+3}^{(3)}+J_{m+1}^{(3)}J_{m+4}^{(3)}+J_{m+2}^{(3)}J_{m+5}^{(3)}\right)\textbf{k}\\
&=\frac{1}{7}\left(\begin{array}{ccc} 3\cdot2^{2(m+1)}(1+4\textbf{i}+8\textbf{j}+16\textbf{k})-2^{m+2}UN_{m}^{(3)}\\
-2^{m+3}U_{m}^{(3)}(\textbf{i}+2\textbf{j}+4\textbf{k})+2(1-\textbf{i}-\textbf{j}+2\textbf{k})\end{array}\right),
\end{align*}
where $UN_{m}^{(3)}=U_{m}^{(3)}+U_{m+1}^{(3)}\textbf{i}+U_{m+2}^{(3)}\textbf{j}+U_{m+3}^{(3)}\textbf{k}$.
\end{proof}

\begin{theorem}\label{th:2}
Let $JN_{m}^{(3)}$ and $jN_{m}^{(3)}$ be the $m$-th terms of the dual third-order Jacobsthal quaternion sequence $\{JN_{m}^{(3)}\}_{m\geq0}$ and the dual third-order Jacobsthal-Lucas quaternion sequence $\{jN_{m}^{(3)}\}_{m\geq0}$, respectively. The following relations are satisfied
\begin{equation}\label{t4}
jN_{m+3}^{(3)}-3JN_{m+3}^{(3)}=2jN_{m}^{(3)},
\end{equation}
\begin{equation}\label{t5}
jN_{m+1}^{(3)}+jN_{m}^{(3)}=3JN_{m+2}^{(3)},
\end{equation}
\begin{equation}\label{t6}
\left(jN_{m}^{(3)}\right)^{2}+3JN_{m+3}^{(3)}jN_{m+3}^{(3)}=4^{m+3}(1+4\textbf{i}+8\textbf{j}+16\textbf{k}).
\end{equation}
\end{theorem}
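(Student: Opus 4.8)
The plan is to split the theorem into its three assertions, of which the first two are immediate and the last carries all the content.

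For (\ref{t4}) and (\ref{t5}) I would work coordinatewise. Since addition and scalar multiplication of dual quaternions act on each of the four coordinates separately, the scalar and the $\textbf{i},\textbf{j},\textbf{k}$ parts of $jN_{m+3}^{(3)}-3JN_{m+3}^{(3)}$ are all of the form $j_{m+3+t}^{(3)}-3J_{m+3+t}^{(3)}$ with $t=0,1,2,3$. Applying the scalar relation (\ref{p:2}) with $n=m+3+t$ rewrites each of these as $2j_{m+t}^{(3)}$, and reassembling the four coordinates gives $2jN_m^{(3)}$. The same mechanism proves (\ref{t5}): the coordinate $j_{m+1+t}^{(3)}+j_{m+t}^{(3)}$ becomes $3J_{m+2+t}^{(3)}$ by (\ref{p:5}), and the four coordinates reassemble to $3JN_{m+2}^{(3)}$.

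The substance is in (\ref{t6}). First I would expand the two products using the dual multiplication rule (\ref{eq:11}), in which $\textbf{i}^2=\textbf{j}^2=\textbf{k}^2=0$ and all mixed unit products vanish, so only the scalar$\times$scalar and scalar$\times$vector contributions survive. This yields $\left(jN_m^{(3)}\right)^2 = \left(j_m^{(3)}\right)^2 + 2j_m^{(3)}j_{m+1}^{(3)}\textbf{i} + 2j_m^{(3)}j_{m+2}^{(3)}\textbf{j} + 2j_m^{(3)}j_{m+3}^{(3)}\textbf{k}$ together with the analogous expansion of $JN_{m+3}^{(3)}jN_{m+3}^{(3)}$. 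Collecting coefficients reduces the claim to four scalar identities. The scalar coefficient is $\left(j_m^{(3)}\right)^2+3J_{m+3}^{(3)}j_{m+3}^{(3)}$, which equals $4^{m+3}$ immediately by (\ref{p:7}) with $n=m+3$. For $t=1,2,3$ the $\textbf{i},\textbf{j},\textbf{k}$ coefficients are $2j_m^{(3)}j_{m+t}^{(3)}+3\big(J_{m+3}^{(3)}j_{m+3+t}^{(3)}+J_{m+3+t}^{(3)}j_{m+3}^{(3)}\big)$, and these must be shown to equal $4,8,16$ times $4^{m+3}$, respectively.

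These three vector identities are where I expect the main difficulty, since they are not direct instances of the listed scalar relations. The cleanest route is to substitute the Binet formulas (\ref{b1})--(\ref{b2}), $J_n^{(3)}=\frac{1}{7}\big(2^{n+1}-V_n^{(3)}\big)$ and $j_n^{(3)}=\frac{1}{7}\big(2^{n+3}+3V_n^{(3)}\big)$, and to use the three-periodicity $V_{n+3}^{(3)}=V_n^{(3)}$ from (\ref{v}) to bring every index into the values $V_m^{(3)}$ and $V_{m+t}^{(3)}$. Each coefficient then splits into a pure power-of-two part, a part linear in the $V$-values, and a part quadratic in them. After clearing the common factor $\frac{1}{49}$ through the arithmetic identity $1+3\cdot 2^4=49$, the power-of-two part evaluates to $2^{t+1}\cdot 4^{m+3}$ for the $t$-th vector coefficient, i.e. to the required $4,8,16$ times $4^{m+3}$. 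The main obstacle is then to check that the remaining $V$-dependent terms vanish: I expect the coefficients of $V_m^{(3)}$, of $V_{m+t}^{(3)}$, and of the product $V_m^{(3)}V_{m+t}^{(3)}$ each to cancel identically for every $t$, so that no case analysis modulo $3$ is needed beyond the periodicity itself (for $t=3$ the two linear terms merge since $V_{m+3}^{(3)}=V_m^{(3)}$, and the quadratic term becomes a multiple of $\big(V_m^{(3)}\big)^2$, but it still cancels). Reassembling the scalar coefficient $4^{m+3}$ with the three vector coefficients then produces $4^{m+3}\big(1+4\textbf{i}+8\textbf{j}+16\textbf{k}\big)$, as claimed.
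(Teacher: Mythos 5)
Your proof is correct and follows essentially the same route as the paper: coordinatewise application of (\ref{p:2}) and (\ref{p:5}) for the first two identities, and expansion of the dual products together with (\ref{p:7}) at $n=m+3$ for the scalar part of the third. The one point where you go beyond the paper is the verification of the three vector coefficients of (\ref{t6}): the paper simply asserts that $2j_{m}^{(3)}j_{m+t}^{(3)}+3\bigl(J_{m+3}^{(3)}j_{m+3+t}^{(3)}+J_{m+3+t}^{(3)}j_{m+3}^{(3)}\bigr)$ equals $2^{t+1}4^{m+3}$ without computation, whereas your Binet-formula argument does check out --- the coefficients of $V_{m}^{(3)}$, of $V_{m+t}^{(3)}$ and of $V_{m}^{(3)}V_{m+t}^{(3)}$ each cancel (all three reduce to the factor $6+18-24=0$ or $18-18=0$), and the power-of-two part carries the factor $1+3\cdot 2^{4}=49$ that clears the denominator, so your outline is in fact a more complete proof than the one printed.
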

\begin{proof}
(\ref{t4}): From identities between third-order Jacobsthal number and third-order Jacobsthal-Lucas number (\ref{p:2}) and (\ref{eq:10}), it follows that
\begin{align*}
jN_{m+3}^{(3)}-3JN_{m+3}^{(3)}&=j_{m+3}^{(3)}+j_{m+4}^{(3)}\textbf{i}+j_{m+5}^{(3)}\textbf{j}+j_{m+6}^{(3)}\textbf{k}\\
&\ \ -3(J_{m+3}^{(3)}+J_{m+4}^{(3)}\textbf{i}+J_{m+5}^{(3)}\textbf{j}+J_{m+6}^{(3)}\textbf{k})\\
&=(j_{m+3}^{(3)}-3J_{m+3}^{(3)})+(j_{m+4}^{(3)}-3J_{m+4}^{(3)})\textbf{i}\\
&\ \ + (j_{m+5}^{(3)}-3J_{m+5}^{(3)})\textbf{j}+(j_{m+6}^{(3)}-3J_{m+6}^{(3)})\textbf{k}\\
&=2j_{m}^{(3)}+2j_{m+1}^{(3)}\textbf{i}+2j_{m+2}^{(3)}\textbf{j}+2j_{m+3}^{(3)}\textbf{k}\\
&=2jN_{m}^{(3)}.
\end{align*}
The proof of (\ref{t5}) is similar to (\ref{t4}), using the identity (\ref{p:5}). 

(\ref{t6}): Now, using Eqs. (\ref{eq:11}), (\ref{eq:16}) and (\ref{p:7}), we get
\begin{align*}
\left(jN_{m}^{(3)}\right)^{2}+&3JN_{m+3}^{(3)}jN_{m+3}^{(3)}\\
&=\left(j_{m}^{(3)}\right)^{2}+2j_{m}^{(3)}j_{m+1}^{(3)}\textbf{i}+2j_{m}^{(3)}j_{m+2}^{(3)}\textbf{j}+2j_{m}^{(3)}j_{m+3}^{(3)}\textbf{k}\\
&\ \ + 3J_{m+3}^{(3)}j_{m+3}^{(3)}+3(J_{m+3}^{(3)}j_{m+4}^{(3)}+J_{m+4}^{(3)}j_{m+3}^{(3)})\textbf{i}\\
&\ \ +3(J_{m+3}^{(3)}j_{m+5}^{(3)}+J_{m+5}^{(3)}j_{m+3}^{(3)})\textbf{j}+3(J_{m+3}^{(3)}j_{m+6}^{(3)}+J_{m+6}^{(3)}j_{m+3}^{(3)})\textbf{k}\\
&=\left(j_{m}^{(3)}\right)^{2}+3J_{m+3}^{(3)}j_{m+3}^{(3)}\\
&\ \ +(2j_{m}^{(3)}j_{m+1}^{(3)}+3(J_{m+3}^{(3)}j_{m+4}^{(3)}+J_{m+4}^{(3)}j_{m+3}^{(3)}))\textbf{i}\\
&\ \ +(2j_{m}^{(3)}j_{m+2}^{(3)}+3(J_{m+3}^{(3)}j_{m+5}^{(3)}+J_{m+5}^{(3)}j_{m+3}^{(3)}))\textbf{j}\\
&\ \ +(2j_{m}^{(3)}j_{m+3}^{(3)}+3(J_{m+3}^{(3)}j_{m+6}^{(3)}+J_{m+6}^{(3)}j_{m+3}^{(3)}))\textbf{k}\\
&=4^{m+3}(1+4\textbf{i}+8\textbf{j}+16\textbf{k}).
\end{align*}
\end{proof}
\begin{theorem}\label{th:3}
Let $JN_{m}^{(3)}$ be the $m$-th term of the dual third-order Jacobsthal quaternion sequence $\{JN_{m}^{(3)}\}_{m\geq0}$. Then, we have the following identity
\begin{equation}\label{t7}
\sum_{s=0}^{m}JN_{s}^{(3)}=JN_{m+1}^{(3)}-\frac{1}{21}\left(7(1+\textbf{i}+4\textbf{j}+7\textbf{k})-4VN_{m+1}^{(3)}+VN_{m}^{(3)}\right),
\end{equation}
where $VN_{m}^{(3)}=V_{m}^{(3)}+V_{m+1}^{(3)}\textbf{i}+V_{m+2}^{(3)}\textbf{j}+V_{m+3}^{(3)}\textbf{k}$.
\end{theorem}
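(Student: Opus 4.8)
The plan is to reduce the dual-quaternion identity to a single scalar summation formula for $J_n^{(3)}$ and then apply that formula four times with shifted indices. First I would split the sum into its four real components,
\[
\sum_{s=0}^{m}JN_{s}^{(3)}=\sum_{s=0}^{m}J_{s}^{(3)}+\left(\sum_{s=0}^{m}J_{s+1}^{(3)}\right)\textbf{i}+\left(\sum_{s=0}^{m}J_{s+2}^{(3)}\right)\textbf{j}+\left(\sum_{s=0}^{m}J_{s+3}^{(3)}\right)\textbf{k},
\]
so that the whole statement follows once each of the four component sums has been evaluated in closed form.

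The key step is to establish the unified scalar identity
\[
\sum_{s=0}^{n}J_{s}^{(3)}=J_{n+1}^{(3)}-\frac{1}{21}\left(7-4V_{n+1}^{(3)}+V_{n}^{(3)}\right),
\]
valid for all $n\geq0$ with no case distinction. I would derive it from the Binet formula (\ref{b1}): the geometric part sums to $\sum_{s=0}^{n}2^{s+1}=2^{n+2}-2$, while the period-three sequence $V_s^{(3)}$ of (\ref{v}) satisfies $\sum_{s=0}^{n}V_s^{(3)}=\frac{1}{3}\left(V_n^{(3)}-V_{n+1}^{(3)}+1\right)$, since its values cycle through $2,-1,0$ and hence close linearly in two consecutive terms. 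Substituting these and rewriting $2^{n+2}=7J_{n+1}^{(3)}+V_{n+1}^{(3)}$ produces the displayed formula; alternatively one can simply verify it against (\ref{p:8}) using the three values of $V_n^{(3)}$ listed in (\ref{v}).

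I would then apply this identity to each component after the reindexing $\sum_{s=0}^{m}J_{s+r}^{(3)}=\sum_{t=0}^{m+r}J_{t}^{(3)}-\sum_{t=0}^{r-1}J_{t}^{(3)}$ for $r=0,1,2,3$, where the subtracted initial sums equal $0,0,1,2$ by the values $J_0^{(3)}=0$ and $J_1^{(3)}=J_2^{(3)}=1$ from (\ref{eq:6}). The four scalar leading terms $J_{m+1}^{(3)},J_{m+2}^{(3)},J_{m+3}^{(3)},J_{m+4}^{(3)}$ assemble exactly into $JN_{m+1}^{(3)}$, and the $V$-terms assemble into $\frac{1}{21}\left(-4VN_{m+1}^{(3)}+VN_{m}^{(3)}\right)$ directly from the definition of $VN_m^{(3)}$.

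The main obstacle, and essentially the only place a sign or coefficient can go wrong, is the bookkeeping of the constant term. Each application of the scalar identity contributes $-\frac{7}{21}$, and for the components carrying $\textbf{j}$ and $\textbf{k}$ this combines with the initial-sum corrections $-1$ and $-2$. Writing these corrections as $-1=-\frac{7}{21}\cdot 3$ and $-2=-\frac{7}{21}\cdot 6$, the four constants collapse to $-\frac{7}{21}$ times $1,1,4,7$ in the $1,\textbf{i},\textbf{j},\textbf{k}$ slots, i.e.\ exactly $-\frac{1}{21}\cdot 7(1+\textbf{i}+4\textbf{j}+7\textbf{k})$. Confirming this alignment of the scalar, $V$, and constant contributions across the four shifts is the computational heart of the argument; everything else is linear reindexing.
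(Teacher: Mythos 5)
Your proposal is correct and follows essentially the same route as the paper: both reduce the identity to the unified scalar formula $\sum_{s=0}^{n}J_{s}^{(3)}=J_{n+1}^{(3)}-\frac{1}{21}\left(7-4V_{n+1}^{(3)}+V_{n}^{(3)}\right)$, apply it to the four shifted component sums with initial corrections $0,0,1,2$, and collect the constants into $7(1+\textbf{i}+4\textbf{j}+7\textbf{k})$. The only slip is verbal: the values of $V_{s}^{(3)}$ cycle through $2,-3,1$, while it is their partial sums that cycle through $2,-1,0$; your closed form $\frac{1}{3}\left(V_{n}^{(3)}-V_{n+1}^{(3)}+1\right)$ for those partial sums is nonetheless correct.
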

\begin{proof}
Since 
\begin{equation}\label{eq:18}
\begin{aligned}
\sum_{s=0}^{m}J_{s}^{(3)}&=J_{m+1}^{(3)}-\frac{1}{21}\left(7-4V_{m+1}^{(3)}+V_{m}^{(3)}\right)\\
&=\left\{ 
\begin{array}{ccc}
J_{m+1}^{(3)} & \textrm{if} & \mynotmod{m}{0}{3} \\ 
J_{m+1}^{(3)}-1 & \textrm{if} & \mymod{m}{0}{3}%
\end{array}%
\right.
\end{aligned}
\end{equation}
(see \cite{Coo-Bac}), we get
\begin{align*}
\sum_{s=0}^{m}JN_{s}^{(3)}&=\sum_{s=0}^{m}J_{s}^{(3)}+\textbf{i}\sum_{s=1}^{m+1}J_{s}^{(3)}+\textbf{j}\sum_{s=2}^{m+2}J_{s}^{(3)}+\textbf{k}\sum_{s=3}^{m+3}J_{s}^{(3)}\\
&=J_{m+1}^{(3)}-\frac{1}{21}\left(7-4V_{m+1}^{(3)}+V_{m}^{(3)}\right)\\
&\ \ +\left(J_{m+2}^{(3)}-\frac{1}{21}\left(7-4V_{m+2}^{(3)}+V_{m+1}^{(3)}\right)\right)\textbf{i}\\
&\ \ +\left(J_{m+3}^{(3)}-\frac{1}{21}\left(28-4V_{m+3}^{(3)}+V_{m+2}^{(3)}\right)\right)\textbf{j}\\
&\ \ +\left(J_{m+4}^{(3)}-\frac{1}{21}\left(49-4V_{m+4}^{(3)}+V_{m+3}^{(3)}\right)\right)\textbf{k}\\
&=JN_{m+1}^{(3)}-\frac{1}{21}\left(7(1+\textbf{i}+4\textbf{j}+7\textbf{k})-4VN_{m+1}^{(3)}+VN_{m}^{(3)}\right),
\end{align*}
where $VN_{m}^{(3)}=V_{m}^{(3)}+V_{m+1}^{(3)}\textbf{i}+V_{m+2}^{(3)}\textbf{j}+V_{m+3}^{(3)}\textbf{k}$.
\end{proof}

\begin{theorem}\label{th:4}
Let $JN_{m}^{(3)}$ and $jN_{m}^{(3)}$ be the $m$-th terms of the dual third-order Jacobsthal quaternion sequence $\{JN_{m}^{(3)}\}_{m\geq0}$ and the dual third-order Jacobsthal-Lucas quaternion sequence $\{jN_{m}^{(3)}\}_{m\geq0}$, respectively. Then, we have
\begin{equation}\label{t8}
jN_{m}^{(3)}\overline{JN}_{m}^{(3)}-\overline{jN}_{m}^{(3)}JN_{m}^{(3)}=2(J_{m}^{(3)}jN_{m}^{(3)}-j_{m}^{(3)}JN_{m}^{(3)}),
\end{equation}
\begin{equation}\label{t9}
jN_{m}^{(3)}JN_{m}^{(3)}+\overline{jN}_{m}^{(3)}\overline{JN}_{m}^{(3)}=2j_{m}^{(3)}JN_{m}^{(3)}.
\end{equation}
\end{theorem}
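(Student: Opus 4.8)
The plan is to prove both identities by direct expansion, using only the degenerate multiplication rule (\ref{eq:11}) together with the sign behaviour of the conjugate. Throughout I would abbreviate $J_r:=J_{m+r}^{(3)}$ and $j_r:=j_{m+r}^{(3)}$ for $r=0,1,2,3$, so that $JN_m^{(3)}=J_0+J_1\textbf{i}+J_2\textbf{j}+J_3\textbf{k}$, $jN_m^{(3)}=j_0+j_1\textbf{i}+j_2\textbf{j}+j_3\textbf{k}$, and the conjugates flip the sign of each of the $\textbf{i},\textbf{j},\textbf{k}$ components. The structural fact I would read off (\ref{eq:11}) is that in this algebra every product of two vector units vanishes, so for any $A=a_0+a_1\textbf{i}+a_2\textbf{j}+a_3\textbf{k}$ and $B=b_0+b_1\textbf{i}+b_2\textbf{j}+b_3\textbf{k}$ one has $AB=a_0b_0+(a_0b_1+a_1b_0)\textbf{i}+(a_0b_2+a_2b_0)\textbf{j}+(a_0b_3+a_3b_0)\textbf{k}$. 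In particular no vector$\times$vector contribution survives, multiplication is commutative, and a product sees the vector parts only through the mixed scalar$\times$vector pairings. This is what will make every cancellation below transparent.

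For (\ref{t8}) I would apply this rule to $jN_m^{(3)}\overline{JN}_m^{(3)}$ and to $\overline{jN}_m^{(3)}JN_m^{(3)}$. Both products have scalar part $j_0J_0$, which cancels in the difference. In the vector slots the single conjugated factor contributes one sign flip, so the mixed terms reinforce rather than cancel: the $\textbf{i}$-coefficient of $jN_m^{(3)}\overline{JN}_m^{(3)}$ is $j_1J_0-j_0J_1$ while that of $\overline{jN}_m^{(3)}JN_m^{(3)}$ is $j_0J_1-j_1J_0$, and likewise for $\textbf{j},\textbf{k}$. Subtracting doubles each such difference, yielding $2(j_rJ_0-j_0J_r)$ in the $r$-th slot. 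Finally I would recognise this as $2\bigl(J_0\,jN_m^{(3)}-j_0\,JN_m^{(3)}\bigr)$, since that quantity has $r$-th vector coefficient $J_0j_r-j_0J_r$ and scalar part $J_0j_0-j_0J_0=0$; restoring $J_0=J_m^{(3)}$, $j_0=j_m^{(3)}$ closes the identity.

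For (\ref{t9}) I would expand $jN_m^{(3)}JN_m^{(3)}$ and $\overline{jN}_m^{(3)}\,\overline{JN}_m^{(3)}$ the same way. Now \emph{both} factors of the second product are conjugated, so on each mixed term the two sign flips compose to a single overall minus in the $\textbf{i},\textbf{j},\textbf{k}$ coefficients, while the scalar part $j_0J_0$ is untouched. Adding the two products therefore annihilates every vector component and reinforces the scalar, so the sum collapses to the pure scalar $2j_0J_0=2j_m^{(3)}J_m^{(3)}$.

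I do not expect a genuine obstacle: both statements are four-coordinate bookkeeping. The only point needing care, and the one I would verify on a small case such as $m=0$ or $m=1$, is the interplay between the conjugation sign and the mixed scalar$\times$vector cross terms, together with the fact that the fully degenerate unit table (\ref{eq:2}) suppresses all vector$\times$vector products. Once that is pinned down, (\ref{t8}) emerges as a doubled commutator of mixed terms and (\ref{t9}) as the vanishing of every vector component, leaving a scalar; notably, neither derivation requires any identity among the third-order Jacobsthal numbers themselves, in contrast to the earlier theorems.
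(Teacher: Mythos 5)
Your argument is correct and is essentially the paper's own proof: both identities are verified by direct coordinate expansion under the degenerate multiplication rule (\ref{eq:11}), with the single versus double conjugation determining which mixed scalar--vector terms cancel and which reinforce. One point deserves flagging, though: your expansion of (\ref{t9}) yields the pure scalar $2j_{m}^{(3)}J_{m}^{(3)}$, which is \emph{not} the stated right-hand side $2j_{m}^{(3)}JN_{m}^{(3)}$ (the latter has nonzero $\textbf{i},\textbf{j},\textbf{k}$ components, so the two cannot be equal). The paper's own computation likewise collapses to the scalar $2j_{m}^{(3)}J_{m}^{(3)}$ in its last displayed line before writing the result as $2j_{m}^{(3)}JN_{m}^{(3)}$, so the discrepancy is a typo in the statement rather than a flaw in your reasoning; the value you derived is the correct one.
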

\begin{proof}
(\ref{t8}): By the Eqs. (\ref{eq:11}) and (\ref{eq:13}), we get
\begin{align*}
&jN_{m}^{(3)}\overline{JN}_{m}^{(3)}-\overline{jN}_{m}^{(3)}JN_{m}^{(3)}\\
&=(j_{m}^{(3)}+j_{m+1}^{(3)}\textbf{i}+j_{m+2}^{(3)}\textbf{j}+j_{m+3}^{(3)}\textbf{k})(J_{m}^{(3)}-J_{m+1}^{(3)}\textbf{i}-J_{m+2}^{(3)}\textbf{j}-J_{m+3}^{(3)}\textbf{k})\\
&\ \ - (j_{m}^{(3)}-j_{m+1}^{(3)}\textbf{i}-j_{m+2}^{(3)}\textbf{j}-j_{m+3}^{(3)}\textbf{k})(J_{m}^{(3)}+J_{m+1}^{(3)}\textbf{i}+J_{m+2}^{(3)}\textbf{j}+J_{m+3}^{(3)}\textbf{k})\\
&=2J_{m}^{(3)}(j_{m+1}^{(3)}\textbf{i}+j_{m+2}^{(3)}\textbf{j}+j_{m+3}^{(3)}\textbf{k})-2j_{m}^{(3)}(J_{m+1}^{(3)}\textbf{i}+J_{m+2}^{(3)}\textbf{j}+J_{m+3}^{(3)}\textbf{k})\\
&=2(J_{m}^{(3)}jN_{m}^{(3)}-j_{m}^{(3)}JN_{m}^{(3)}).
\end{align*}
(\ref{t9}):
\begin{align*}
&jN_{m}^{(3)}JN_{m}^{(3)}+\overline{jN}_{m}^{(3)}\overline{JN}_{m}^{(3)}\\
&=(j_{m}^{(3)}+j_{m+1}^{(3)}\textbf{i}+j_{m+2}^{(3)}\textbf{j}+j_{m+3}^{(3)}\textbf{k})(J_{m}^{(3)}+J_{m+1}^{(3)}\textbf{i}+J_{m+2}^{(3)}\textbf{j}+J_{m+3}^{(3)}\textbf{k})\\
&\ \ + (j_{m}^{(3)}-j_{m+1}^{(3)}\textbf{i}-j_{m+2}^{(3)}\textbf{j}-j_{m+3}^{(3)}\textbf{k})(J_{m}^{(3)}-J_{m+1}^{(3)}\textbf{i}-J_{m+2}^{(3)}\textbf{j}-J_{m+3}^{(3)}\textbf{k})\\
&=j_{m}^{(3)}J_{m}^{(3)}+(j_{m}^{(3)}J_{m+1}^{(3)}+j_{m+1}^{(3)}J_{m}^{(3)})\textbf{i}+(j_{m}^{(3)}J_{m+2}^{(3)}+j_{m+2}^{(3)}J_{m}^{(3)})\textbf{j}\\
&\ \ +(j_{m}^{(3)}J_{m+3}^{(3)}+j_{m+3}^{(3)}J_{m}^{(3)})\textbf{k}\\
&\ \ +j_{m}^{(3)}J_{m}^{(3)}-(j_{m}^{(3)}J_{m+1}^{(3)}+j_{m+1}^{(3)}J_{m}^{(3)})\textbf{i}-(j_{m}^{(3)}J_{m+2}^{(3)}+j_{m+2}^{(3)}J_{m}^{(3)})\textbf{j}\\
&\ \ -(j_{m}^{(3)}J_{m+3}^{(3)}+j_{m+3}^{(3)}J_{m}^{(3)})\textbf{k}\\
&=2j_{m}^{(3)}JN_{m}^{(3)}.
\end{align*} 
\end{proof}

\begin{theorem}[Binet's Formulas]\label{th:5}
Let $JN_{m}^{(3)}$ and $jN_{m}^{(3)}$ be $m$-th terms of the dual third-order Jacobsthal quaternion sequence $\{JN_{m}^{(3)}\}_{m\geq0}$ and the dual third-order Jacobsthal-Lucas quaternion sequence $\{jN_{m}^{(3)}\}_{m\geq0}$, respectively. For $m\geq0$, the Binet's formulas for these quaternions are as follows:
\begin{equation}\label{t10}
\begin{aligned}
JN_{m}^{(3)}&=\frac{2}{7}2^{m}\underline{\alpha}-\frac{3+2i\sqrt{3}}{21}\underline{\omega_{1}}\omega_{1}^{m}-\frac{3-2i\sqrt{3}}{21}\underline{\omega_{2}}\omega_{2}^{m}\\
&=\frac{1}{7}\left(2^{m+1}\underline{\alpha}-VN_{m}^{(3)}\right)
\end{aligned}
\end{equation}
and
\begin{equation}\label{t11}
\begin{aligned}
jN_{m}^{(3)}&=\frac{8}{7}2^{m}\underline{\alpha}+\frac{3+2i\sqrt{3}}{7}\underline{\omega_{1}}\omega_{1}^{m}+\frac{3-2i\sqrt{3}}{7}\underline{\omega_{2}}\omega_{2}^{m}\\
&=\frac{1}{7}\left(2^{m+3}\underline{\alpha}+3VN_{m}^{(3)}\right),
\end{aligned}
\end{equation}
respectively, where $V_{m}^{(3)}$ is the sequence defined by 
\begin{equation}\label{vq}
VN_{m}^{(3)}=\left\{ 
\begin{array}{ccc}
2-3\textbf{i}+\textbf{j}+2\textbf{k} & \textrm{if} & \mymod{m}{0}{3} \\ 
-3+\textbf{i}+2\textbf{j}-3\textbf{k} & \textrm{if} & \mymod{m}{1}{3} \\ 
1+2\textbf{i}-3\textbf{j}+\textbf{k} & \textrm{if} & \mymod{m}{2}{3}%
\end{array}%
\right. ,
\end{equation}
$\underline{\alpha}=1+2\textbf{i}+4\textbf{j}+8\textbf{k}$ and $\underline{\omega_{1,2}}=1+\omega_{1,2}\textbf{i}+\omega_{1,2}^{2}\textbf{j}+\textbf{k}$.
\end{theorem}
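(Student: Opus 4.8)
The plan is to reduce both identities to the scalar Binet formulas (\ref{b1}) and (\ref{b2}), substituting them componentwise into the dual quaternions and then factoring the quaternionic part out of the geometric progressions. It is convenient to write
\[
JN_{m}^{(3)}=\sum_{t=0}^{3}J_{m+t}^{(3)}\,e_{t},\qquad e_{0}=1,\ e_{1}=\textbf{i},\ e_{2}=\textbf{j},\ e_{3}=\textbf{k},
\]
so that each geometric term in the Binet expansion carries a common power that can be pulled to the front.

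First I would establish the first line of (\ref{t10}). Substituting (\ref{b1}) into every component gives
\[
JN_{m}^{(3)}=\frac{2}{7}2^{m}\sum_{t=0}^{3}2^{t}e_{t}-\frac{3+2i\sqrt{3}}{21}\omega_{1}^{m}\sum_{t=0}^{3}\omega_{1}^{t}e_{t}-\frac{3-2i\sqrt{3}}{21}\omega_{2}^{m}\sum_{t=0}^{3}\omega_{2}^{t}e_{t}.
\]
The three unit sums are exactly the constants named in the statement: $\sum_{t=0}^{3}2^{t}e_{t}=1+2\textbf{i}+4\textbf{j}+8\textbf{k}=\underline{\alpha}$, and for $\ell=1,2$ one has $\sum_{t=0}^{3}\omega_{\ell}^{t}e_{t}=1+\omega_{\ell}\textbf{i}+\omega_{\ell}^{2}\textbf{j}+\omega_{\ell}^{3}\textbf{k}=\underline{\omega_{\ell}}$, where the last equality uses $\omega_{1}^{3}=\omega_{2}^{3}=1$ since the $\omega_{\ell}$ are cube roots of unity. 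This reproduces the first line of (\ref{t10}).

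For the compact second line I would instead substitute the equivalent form $J_{n}^{(3)}=\frac{1}{7}(2^{n+1}-V_{n}^{(3)})$ from (\ref{b1}). Pulling $2^{m+1}$ out of the power-of-two part and recognizing $\sum_{t=0}^{3}V_{m+t}^{(3)}e_{t}=VN_{m}^{(3)}$ by definition yields
\[
JN_{m}^{(3)}=\frac{1}{7}\left(2^{m+1}\underline{\alpha}-VN_{m}^{(3)}\right).
\]
To obtain the explicit case table (\ref{vq}) I would read off the residues of $m,m+1,m+2,m+3$ modulo $3$ and insert the periodic values $2,-3,1$ of $V_{n}^{(3)}$ from (\ref{v}); for example $\mymod{m}{0}{3}$ gives the tuple $(2,-3,1,2)$, hence $VN_{m}^{(3)}=2-3\textbf{i}+\textbf{j}+2\textbf{k}$, and the residues $1$ and $2$ are handled identically.

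Finally, the Jacobsthal--Lucas identities (\ref{t11}) follow from the very same two substitutions applied to (\ref{b2}), with $\frac{8}{7}$ and $+\frac{3\pm2i\sqrt{3}}{7}$ in place of the Jacobsthal coefficients and $+3VN_{m}^{(3)}$ in place of $-VN_{m}^{(3)}$. No step is genuinely hard; the one point requiring care is the factorization of the unit sums, that is, noticing that the quaternion units ride along the geometric progressions and that $\omega_{\ell}^{3}=1$ collapses the $\textbf{k}$-coefficient to $1$---which is exactly why $\underline{\omega_{\ell}}$ is defined with a constant $\textbf{k}$-term rather than an $\omega_{\ell}^{3}$ one.
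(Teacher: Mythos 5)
Your proposal is correct and follows essentially the same route as the paper: substitute the scalar Binet formulas (\ref{b1}) and (\ref{b2}) componentwise, factor the quaternion units into the geometric progressions to produce $\underline{\alpha}$ and $\underline{\omega_{1,2}}$ (using $\omega_{\ell}^{3}=1$ for the $\textbf{k}$-term), and read off $VN_{m}^{(3)}$ from the periodic values of $V_{n}^{(3)}$. Your write-up is in fact slightly more explicit than the paper's about why the $\textbf{k}$-coefficient of $\underline{\omega_{\ell}}$ is constant and about verifying the case table (\ref{vq}).
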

\begin{proof}
Repeated use of (\ref{b1}) in (\ref{eq:8}) enables one to write for $\underline{\alpha}=1+2\textbf{i}+4\textbf{j}+8\textbf{k}$ and $\underline{\omega_{1,2}}=1+\omega_{1,2}\textbf{i}+\omega_{1,2}^{2}\textbf{j}+\textbf{k}$  
\begin{equation}\label{eq:19}
\begin{aligned}
JN_{m}^{(3)}&=J_{m}^{(3)}+J_{m+1}^{(3)}\textbf{i}+J_{m+2}^{(3)}\textbf{j}+J_{m+3}^{(3)}\textbf{k}\\
&=\frac{2}{7}2^{n}-\frac{3+2i\sqrt{3}}{21}\omega_{1}^{n}-\frac{3-2i\sqrt{3}}{21}\omega_{2}^{n}\\
&\ \ +\left(\frac{2}{7}2^{m+1}-\frac{3+2i\sqrt{3}}{21}\omega_{1}^{m+1}-\frac{3-2i\sqrt{3}}{21}\omega_{2}^{m+1}\right)\textbf{i}\\
&\ \ +\left(\frac{2}{7}2^{m+2}-\frac{3+2i\sqrt{3}}{21}\omega_{1}^{m+2}-\frac{3-2i\sqrt{3}}{21}\omega_{2}^{m+2}\right)\textbf{j}\\
&\ \ +\left(\frac{2}{7}2^{m+3}-\frac{3+2i\sqrt{3}}{21}\omega_{1}^{m+3}-\frac{3-2i\sqrt{3}}{21}\omega_{2}^{m+3}\right)\textbf{k}\\
&=\frac{1}{7}\underline{\alpha}2^{m+1}-\frac{3+2i\sqrt{3}}{21}\underline{\omega_{1}}\omega_{1}^{m}-\frac{3-2i\sqrt{3}}{21}\underline{\omega_{2}}\omega_{2}^{m}
\end{aligned} 
\end{equation}
and similarly making use of (\ref{b2}) in (\ref{eq:9}) yields
\begin{equation}\label{eq:20}
\begin{aligned}
jN_{m}^{(3)}&=j_{m}^{(3)}+j_{m+1}^{(3)}\textbf{i}+j_{m+2}^{(3)}\textbf{j}+j_{m+3}^{(3)}\textbf{k}\\
&=\frac{1}{7}\underline{\alpha}2^{m+3}+\frac{3+2i\sqrt{3}}{7}\underline{\omega_{1}}\omega_{1}^{m}+\frac{3-2i\sqrt{3}}{7}\underline{\omega_{2}}\omega_{2}^{m}.
\end{aligned}
\end{equation}
The formulas in (\ref{eq:19}) and (\ref{eq:20}) are called as Binet's formulas for the dual third-order Jacobsthal and dual third-order Jacobsthal-Lucas quaternions, respectively. Using notation in (\ref{vq}), we obtain the results (\ref{t10}) and (\ref{t11}).
\end{proof}

\begin{theorem}[D'Ocagne-like Identity]\label{th:6}
Let $JN_{m}^{(3)}$ be the $m$-th terms of the dual third-order Jacobsthal quaternion sequence $\{JN_{m}^{(3)}\}_{m\geq0}$. In this case, for $n\geq m \geq0$, the d'Ocagne identities for $JN_{m}^{(3)}$ is as follows:
\begin{equation}\label{t12}
JN_{n}^{(3)}JN_{m+1}^{(3)}-JN_{n+1}^{(3)}JN_{m}^{(3)}=\frac{1}{7}\left( 
\begin{array}{ccc} \underline{\alpha}(2^{n+1}UN_{m+1}^{(3)}-2^{m+1}UN_{n+1}^{(3)})\\
+(1-\textbf{i}-\textbf{j}+2\textbf{k})U_{n-m}^{(3)}
\end{array}%
\right),
\end{equation}
\begin{equation}\label{t13}
\left(JN_{m+1}^{(3)}\right)^{2}-JN_{m+2}^{(3)}JN_{m}^{(3)}=\frac{1}{7}\left( 
\begin{array}{ccc} 2^{m+1}\underline{\alpha}(2UN_{m+1}^{(3)}-UN_{m+2}^{(3)})\\
+(1-\textbf{i}-\textbf{j}+2\textbf{k})
\end{array}%
\right),
\end{equation}
where $UN_{m+1}^{(3)}=\frac{1}{7}(2VN_{m}^{(3)}-VN_{m+1}^{(3)})$ and $\underline{\alpha}=1+2\textbf{i}+4\textbf{j}+8\textbf{k}$.
\end{theorem}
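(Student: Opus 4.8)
The plan is to prove the first identity (\ref{t12}) directly from the Binet formula of Theorem \ref{th:5}, and then to read off (\ref{t13}) as the specialization $n=m+1$. The structural fact I would exploit throughout is that the dual quaternion multiplication defined by (\ref{eq:2}) is \emph{commutative}: since every product of two imaginary units vanishes, for $p=p_0+p_1\textbf{i}+p_2\textbf{j}+p_3\textbf{k}$ and $q=q_0+q_1\textbf{i}+q_2\textbf{j}+q_3\textbf{k}$ one has $pq=qp=p_0q_0+(p_0q_1+p_1q_0)\textbf{i}+(p_0q_2+p_2q_0)\textbf{j}+(p_0q_3+p_3q_0)\textbf{k}$, exactly as in (\ref{eq:11}). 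This commutativity is what makes the calculation manageable.

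First I would write $JN_k^{(3)}=\frac{1}{7}\left(2^{k+1}\underline{\alpha}-VN_k^{(3)}\right)$ for each of $k=n,\,m+1,\,n+1,\,m$ and expand $JN_n^{(3)}JN_{m+1}^{(3)}-JN_{n+1}^{(3)}JN_m^{(3)}$ as $\frac{1}{49}$ times a sum of four kinds of terms. The pure $\underline{\alpha}^2$ contribution carries the factor $2^{n+1}2^{m+2}-2^{n+2}2^{m+1}=2^{n+m+3}-2^{n+m+3}=0$ and so cancels. The terms linear in $\underline{\alpha}$ regroup as $\underline{\alpha}\left[2^{n+1}(2VN_m^{(3)}-VN_{m+1}^{(3)})-2^{m+1}(2VN_n^{(3)}-VN_{n+1}^{(3)})\right]$, and invoking the definition $UN_{k+1}^{(3)}=\frac{1}{7}(2VN_k^{(3)}-VN_{k+1}^{(3)})$ this becomes $7\underline{\alpha}\left(2^{n+1}UN_{m+1}^{(3)}-2^{m+1}UN_{n+1}^{(3)}\right)$; dividing by $49$ yields precisely the first bracketed summand in (\ref{t12}).

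The remaining piece is the product of the periodic dual quaternions, $\frac{1}{49}\left(VN_n^{(3)}VN_{m+1}^{(3)}-VN_{n+1}^{(3)}VN_m^{(3)}\right)$, which I must show equals $\frac{1}{7}(1-\textbf{i}-\textbf{j}+2\textbf{k})U_{n-m}^{(3)}$. Since $V_k^{(3)}$ has period $3$ by (\ref{v}), the dual quaternion $VN_k^{(3)}$ takes only the three values listed in (\ref{vq}); call them $P_0,P_1,P_2$ according to $k\bmod 3$. The product thus depends only on the residues $n\bmod 3$ and $m\bmod 3$, and I would verify the claimed equality by a finite case check. By commutativity, whenever $n\equiv m\pmod 3$ the two products coincide and their difference is $0$, matching $U_{n-m}^{(3)}=0$; for the residues $n-m\equiv 1$ and $n-m\equiv 2$ a direct computation (for instance $P_1^2-P_2P_0=7(1-\textbf{i}-\textbf{j}+2\textbf{k})$ in the representative case $n\equiv1$, $m\equiv0$) gives $\pm 7(1-\textbf{i}-\textbf{j}+2\textbf{k})$, in agreement with $U_{n-m}^{(3)}=\pm1$. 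This case analysis is the main obstacle, though it is routine once one observes that only the residue $n-m\bmod 3$ actually matters.

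Finally, (\ref{t13}) is the specialization $n=m+1$ of (\ref{t12}): the left side becomes $(JN_{m+1}^{(3)})^2-JN_{m+2}^{(3)}JN_m^{(3)}$, while on the right $2^{n+1}=2^{m+2}=2\cdot 2^{m+1}$ permits factoring out $2^{m+1}$ to produce $2^{m+1}\underline{\alpha}(2UN_{m+1}^{(3)}-UN_{m+2}^{(3)})$, and $U_{n-m}^{(3)}=U_1^{(3)}=1$ turns the second summand into $1-\textbf{i}-\textbf{j}+2\textbf{k}$, reproducing (\ref{t13}) exactly.
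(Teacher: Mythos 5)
Your proposal is correct and follows essentially the same route as the paper: substitute the Binet form $JN_k^{(3)}=\tfrac{1}{7}(2^{k+1}\underline{\alpha}-VN_k^{(3)})$, observe that the $\underline{\alpha}^{2}$ terms cancel, regroup the cross terms via $UN_{k+1}^{(3)}=\tfrac{1}{7}(2VN_k^{(3)}-VN_{k+1}^{(3)})$, and obtain (\ref{t13}) by setting $n=m+1$. The only difference is that you spell out the mod-$3$ case check for $VN_{n}^{(3)}VN_{m+1}^{(3)}-VN_{n+1}^{(3)}VN_{m}^{(3)}=7(1-\textbf{i}-\textbf{j}+2\textbf{k})U_{n-m}^{(3)}$, which the paper simply asserts, so your write-up is if anything more complete.
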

\begin{proof}
(\ref{t12}): Using Eqs. (\ref{t10}) and (\ref{vq}), we get
\begin{equation}\label{eq:21}
\begin{aligned}
JN_{n}^{(3)}JN_{m+1}^{(3)}&-JN_{n+1}^{(3)}JN_{m}^{(3)}\\
&=\frac{1}{49}\left(\begin{array}{ccc}(2^{n+1}\underline{\alpha}-VN_{n}^{(3)})(2^{m+2}\underline{\alpha}-VN_{m+1}^{(3)})\\
-(2^{n+2}\underline{\alpha}-VN_{n+1}^{(3)})(2^{m+1}\underline{\alpha}-VN_{m}^{(3)})\end{array}%
\right)\\
&=\frac{1}{49}\left( 
\begin{array}{ccc}
-2^{n+1}\underline{\alpha}VN_{m+1}^{(3)}-2^{m+2}VN_{n}^{(3)}\underline{\alpha}+VN_{n}^{(3)}VN_{m+1}^{(3)}\\
+2^{n+2}\underline{\alpha}VN_{m}^{(3)}+2^{m+1}VN_{n+1}^{(3)}\underline{\alpha}-VN_{n+1}^{(3)}VN_{m}^{(3)}
\end{array}%
\right)\\
&=\frac{1}{7}\left( 
\begin{array}{ccc} (1+2\textbf{i}+4\textbf{j}+8\textbf{k})(2^{n+1}UN_{m+1}^{(3)}-2^{m+1}UN_{n+1}^{(3)})\\
+(1-\textbf{i}-\textbf{j}+2\textbf{k})U_{n-m}^{(3)}
\end{array}%
\right),
\end{aligned}
\end{equation}
where $UN_{m+1}^{(3)}=\frac{1}{7}(2VN_{m}^{(3)}-VN_{m+1}^{(3)})$ and $VN_{m}^{(3)}$ as in (\ref{vq}). In particular, if $n=m+1$ in Eq. (\ref{eq:21}), we obtain for $m\geq 0$,
\begin{equation}\label{eq:22}
\left(JN_{m+1}^{(3)}\right)^{2}-JN_{m+2}^{(3)}JN_{m}^{(3)}=\frac{1}{7}\left( 
\begin{array}{ccc} (1+2\textbf{i}+4\textbf{j}+8\textbf{k})2^{m+1}(2UN_{m+1}^{(3)}-UN_{m+2}^{(3)})\\
+(1-\textbf{i}-\textbf{j}+2\textbf{k})
\end{array}%
\right).
\end{equation}
\end{proof}

We will give an example in which we check in a particular case the Cassini-like identity for dual third-order Jacobsthal quaternions.

\begin{example}\label{eq:23}
Let $JN_{0}^{(3)}$, $JN_{1}^{(3)}$, $JN_{2}^{(3)}$ and $JN_{3}^{(3)}$ be the dual third-order Jacobsthal quaternions such that $JN_{0}^{(3)}=\textbf{i}+\textbf{j}+2\textbf{k}$, $JN_{1}^{(3)}=1+\textbf{i}+2\textbf{j}+5\textbf{k}$, $JN_{2}^{(3)}=1+2\textbf{i}+5\textbf{j}+9\textbf{k}$ and $JN_{3}^{(3)}=2+5\textbf{i}+9\textbf{j}+18\textbf{k}$. In this case,
\begin{align*}
\left(JN_{1}^{(3)}\right)^{2}-JN_{2}^{(3)}JN_{0}^{(3)}&=(1+\textbf{i}+2\textbf{j}+5\textbf{k})^{2}-(1+2\textbf{i}+5\textbf{j}+9\textbf{k})(\textbf{i}+\textbf{j}+2\textbf{k})\\
&=(1+2\textbf{i}+4\textbf{j}+10\textbf{k})-(\textbf{i}+\textbf{j}+2\textbf{k})\\
&=1+\textbf{i}+3\textbf{j}+8\textbf{k}\\
&=\frac{1}{7}\left( 
\begin{array}{ccc} 2(1+2\textbf{i}+4\textbf{j}+8\textbf{k})(2UN_{1}^{(3)}-UN_{2}^{(3)})\\
+(1-\textbf{i}-\textbf{j}+2\textbf{k})
\end{array}%
\right)
\end{align*}
and
\begin{align*}
\left(JN_{2}^{(3)}\right)^{2}-JN_{3}^{(3)}JN_{1}^{(3)}&=(1+2\textbf{i}+5\textbf{j}+9\textbf{k})^{2}-(2+5\textbf{i}+9\textbf{j}+18\textbf{k})(1+\textbf{i}+2\textbf{j}+5\textbf{k})\\
&=(1+4\textbf{i}+10\textbf{j}+18\textbf{k})-(2+7\textbf{i}+13\textbf{j}+28\textbf{k})\\
&=-1-3\textbf{i}-3\textbf{j}-10\textbf{k}\\
&=\frac{1}{7}\left( 
\begin{array}{ccc} 4(1+2\textbf{i}+4\textbf{j}+8\textbf{k})(2UN_{2}^{(3)}-UN_{3}^{(3)})\\
+(1-\textbf{i}-\textbf{j}+2\textbf{k})
\end{array}%
\right)
\end{align*}
\end{example}

\section{Conclusions}\label{sec:3}
There are two differences between the dual third-order Jacobsthal and the dual coefficient third-order Jacobsthal quaternions. The first one is as follows: the dual coefficient third-order Jacobsthal quaternionic units are $\textbf{i}^{2}=\textbf{j}^{2}=\textbf{k}^{2}=\textbf{ijk}=-1$ whereas the dual third-order Jacobsthal quaternionic units are $\textbf{i}^{2}=\textbf{j}^{2}=\textbf{k}^{2}=0,\ \textbf{ij}=-\textbf{ji}=\textbf{jk}=-\textbf{kj}=\textbf{ik}=-\textbf{ki}=0$. The second one is as follows: the elements of the dual coefficient third-order Jacobsthal quaternion are
$J_{m}^{(3)}+\varepsilon J_{m+1}^{(3)}$ ($\varepsilon^{2}=0,\ \varepsilon\neq 0$) whereas the elements of the dual third-order Jacobsthal quaternions are $m$-th third-order Jacobsthal number $J_{m}^{(3)}$.

\medskip
\end{document}